\crefname{assumption}{Assumption}{Assumptions}
\DeclareMathOperator*{\dom}{\mathcal{D}}
\DeclareMathOperator*{\graph}{\mathcal{G}}
\DeclareMathOperator*{\partitions}{part}
\newcommand*{\bs}{\boldsymbol}
\newcommand*{\Cov}{\mathbb{C}\mathrm{ov}}
\newcommand*{\defeq}{\coloneqq}
\newcommand*{\defterm}{\textbf}
\newcommand*{\E}{\mathbb{E}}
\renewcommand*{\epsilon}{\varepsilon}
\newcommand*{\GP}{\mathcal{GP}}
\newcommand*{\hatotimes}{\mathbin{\hat{\mathord{\otimes}}}}
\newcommand*{\iunit}{\mathrm{i}}
\newcommand*{\Naturals}{\mathbb{N}}
\newcommand*{\Normal}{\mathcal{N}}
\newcommand*{\one}{\mathds{1}}
\renewcommand*{\P}{\mathbb{P}}
\newcommand*{\quark}{\setbox0\hbox{$x$}\hbox to\wd0{\hss$\cdot$\hss}}
\newcommand*{\rd}{\mathrm{d}}
\newcommand*{\Reals}{\mathbb{R}}
\DeclareMathOperator*{\SPAN}{span}
\newcommand*{\U}{\mathcal{U}}
\newcommand*{\V}{\mathcal{V}}
\renewcommand*{\geq}{\geqslant}
\renewcommand*{\leq}{\leqslant}
\renewcommand*{\mathbf}{\boldsymbol}
\newcommand{\todo}[1]{\bgroup\color{red}#1\egroup}
\newcommand{\TJSsays}[1]{\bgroup\color{olive}Tim:~#1\egroup}
\newcommand{\TMsays}[1]{\bgroup\color{green}Tadashi:~#1\egroup}
\theoremstyle{plain}
\newtheorem{theorem}{\sffamily Theorem}[section]
\newtheorem{lemma}[theorem]{\sffamily Lemma}
\theoremstyle{definition}
\newtheorem{example}[theorem]{\sffamily Example}
\newtheorem{remark}[theorem]{\sffamily Remark}
\newcommand{\absval}[1]{\lvert #1 \rvert}
\newcommand{\dualprod}[2]{\langle #1 \vert #2 \rangle}
\newcommand{\norm}[1]{\lVert #1 \rVert}
\newcommand{\set}[2]{\{ #1 \mid #2 \}}
\newcommand{\Dualprod}[2]{\left\langle #1 \middle\vert #2 \right\rangle}
\numberwithin{equation}{section}
\numberwithin{figure}{section}
\numberwithin{table}{section}
\newcommand*{\arXiv}[1]{\bgroup\color{blue}\href{https://arxiv.org/abs/#1}{arXiv:#1}\egroup}
\newcommand*{\doi}[1]{\bgroup\color{blue}\href{https://doi.org/#1}{doi:#1}\egroup}
\newcommand*{\email}[1]{\bgroup\color{blue}\href{mailto:#1}{#1}\egroup}
\renewcommand*{\url}[1]{\bgroup\color{blue}\href{#1}{#1}\egroup}
\setlist[enumerate]{nosep}
\setlist[itemize]{nosep}
\newcommand{\proofheadfont}{\bfseries\sffamily}
\xpatchcmd{\proof}{\itshape}{\proofheadfont}{}{}
\let\oldtitle\title
\renewcommand{\title}[1]{\oldtitle{#1}\newcommand{\theshorttitle}{#1}}
\newcommand{\shorttitle}[1]{\renewcommand{\theshorttitle}{#1}}
\let\oldauthor\author
\renewcommand{\author}[1]{\oldauthor{#1}\newcommand{\theshortauthor}{#1}}
\newcommand{\shortauthor}[1]{\renewcommand{\theshortauthor}{#1}}
\newcommand{\theabstract}[1]{\par\bgroup\noindent\textbf{\textsf{Abstract.}} #1\egroup}
\newcommand{\thekeywords}[1]{\par\smallskip\bgroup\noindent\textbf{\textsf{Keywords.}}\newcommand{\and}{ $\bullet$ } #1\egroup}
\newcommand{\themsc}[1]{\par\smallskip\bgroup\noindent\textbf{\textsf{2020 Mathematics Subject Classification.}}\newcommand{\and}{ $\bullet$ } #1\egroup}
\newcommand*{\affilref}[1]{\ref{affiliation#1}}
\newcommand*{\affiliation}[3]{
	\footnotetext[#1]{\label{affiliation#2}#3}
}
\title{Images of Gaussian and other stochastic processes under closed, densely-defined, unbounded linear operators}
\shorttitle{Images of stochastic processes under unbounded linear operators}
\author{
	Tadashi~Matsumoto\textsuperscript{\affilref{Warwick}}
	\and
	T.~J.~Sullivan\textsuperscript{\affilref{Warwick},\affilref{Turing}}
}
\date{\today}
\begin{document}
\maketitle
\affiliation{1}{Warwick}{Mathematics Institute and School of Engineering, University of Warwick, Coventry, CV4 7AL, United Kingdom\newline (\email{t.matsumoto@warwick.ac.uk}, \email{t.j.sullivan@warwick.ac.uk})}
\affiliation{2}{Turing}{Alan Turing Institute, 96 Euston Road, London, NW1 2DB, United Kingdom}

\begin{abstract}\small
	\theabstract{Gaussian processes (GPs) are widely-used tools in spatial statistics and machine learning and the formulae for the mean function and covariance kernel of a GP $T u$ that is the image of another GP $u$ under a linear transformation $T$ acting on the sample paths of $u$ are well known, almost to the point of being folklore.
However, these formulae are often used without rigorous attention to technical details, particularly when $T$ is an unbounded operator such as a differential operator, which is common in many modern applications.
This note provides a self-contained proof of the claimed formulae for the case of a closed, densely-defined operator $T$ acting on the sample paths of a square-integrable (not necessarily Gaussian) stochastic process.
Our proof technique relies upon Hille's theorem for the Bochner integral of a Banach-valued random variable.}
	\thekeywords{Bochner integral%
\and%
closed operator%
\and%
densely-defined operator%
\and%
Gaussian process%
\and%
Hille's theorem%
\and%
unbounded operator%
}
	\themsc{60G12
\and%
60G15
\and%
46G10
\and%
47B01
}
\end{abstract}

\section{Introduction}
\label{sec:introduction}

It is a basic fact in multivariate probability and statistics that, if $u \sim \Normal(m, C)$ is a normally-distributed $\Reals^{p}$-valued random variable with mean vector $m \in \Reals^{p}$ and covariance matrix $C \in \Reals^{p \times p}$, and $T \in \Reals^{q \times p}$, then the $\Reals^{q}$-valued random variable $T u$ satisfies
\begin{equation}
	\label{eq:image_normal}
	T u \sim \Normal(T m, T C T^{\ast}) ,
\end{equation}
where $T^{\ast} \in \Reals^{p \times q}$ denotes the transpose/adjoint of $T$.
There is a similar formula for the case of a Gaussian stochastic process (GP) $u \sim \GP(m, k)$ on an index set $X$ with mean function $m(x) \defeq \E[u(x)]$ and covariance kernel $k(x_{1}, x_{2}) \defeq \Cov[ u(x_{1}), u(x_{2}) ]$:
if $T$ is a deterministic linear operator acting on functions in $\Reals^{X}$, i.e.~acting pathwise on sample realisations of $u$, then
\begin{equation}
	\label{eq:image_GP}
	T u \sim \GP(T m, T_{1} T_{2} k) ,
\end{equation}
where $T_{j}$ denotes the application of $T$ to the $j$\textsuperscript{th} argument of a function of two or more variables.
Where we write $T_{1} T_{2} k$, alternative notations such as $T k T^{\ast}$, $T T^{\ast} k$, and $T \overline{T} k$ are also commonly used in the literature.
As a concrete prototypical example, as per \citet[Section~9.4]{RasmussenWilliams2006}, if $u \sim \GP(m, k)$ is a GP on $X = \Reals$, then $\frac{\rd u}{\rd x} \sim \GP \bigl( \frac{\rd m}{\rd x} , \frac{\partial^{2} k}{\partial x_{1} \partial x_{2}} \bigr)$, and indeed this special case can be justified in the sense of mean-square differentiability \citep[Theorem~9A-2]{PapoulisPillai2002}.

However, upon close examination, formula \eqref{eq:image_GP} is not completely trivial.
One elementary question is whether the operators $T_{1}$ and $T_{2}$ commute, or at least whether $T_{1} T_{2} k = T_{2} T_{1} k$;
this issue is arguably obscured rather than solved by notations like $T k T^{\ast}$.
A more subtle question is whether \eqref{eq:image_GP} is valid for arbitrary linear operators $T$, and in particular for \emph{unbounded} operators.
This note addresses both these points.

The unbounded case has present-day practical relevance.
Many recent applications of GPs, such as linearly- and algorithmically-constrained GPs \citep{Jidling2017, LangeHegermann2018, LangeHegermann2021, BesginowLangeHegermann2022}, numerical and physics-informed GPs \citep{PangKarniadakis2020, RaissiEtAl2018} and probabilistic numerical methods for differential equations \citep{CockayneOatesSullivanGirolami2017a, HennigOsborneKersting2022, OatesCockayneAykroydGirolami2019, PfortnerEtAl2022, Saerkkae2011}, seek to apply \eqref{eq:image_GP} in the case that $T$ is an ordinary or partial differential operator.
These applications typically then wish to form the conditional distribution of $u$ given an observation of $T u$ --- for which it is necessary to know that $T u$ is a GP and to know its mean and covariance structure.
However, in the main, these papers mention few requirements on $T$ and none provides a proof of \eqref{eq:image_GP} in the general unbounded case.
Some, such as \citet[Appendix~B]{PfortnerEtAl2022} and \citet[Section~2 and Appendix~A]{HaerkoenenEtAl2023}, do state \eqref{eq:image_GP} as a theorem, but add the restriction that $T$ must be a bounded/continuous operator between Banach/Fr\'echet spaces of functions.
In the case of a differential operator, this can only be achieved by carefully setting up the function spaces in an artificial and inflexible way --- e.g.~by making $T = \frac{\rd}{\rd x}$ a bounded operator from $\mathcal{C}^{1} ([0, 1]; \Reals)$ into $\mathcal{C}^{0} ([0, 1]; \Reals)$ with their usual norms --- that complicates repeated application of $T$ (as needed, e.g., for filtering and data assimilation).
Other authors, e.g.~\citet{LangeHegermann2021}, give a proof for a measurable linear operator $T$ that commutes with expectation, but neither stress the sense in which the expectation is meant nor give sufficient conditions for this commutativity.

This note aims to give a short, self-contained, rigorous proof of \eqref{eq:image_GP}, treating $T$ as an unbounded operator.
Our main result, which rests upon Hille's theorem for the Bochner integral, is the following:

\begin{theorem}
	\label{thm:main}
	Let $X$ be a non-empty set, let $\U, \V \subseteq \Reals^{X}$ be real Banach spaces, and let $T \colon \dom(T) \subseteq \U \to \V$ be a closed operator.
	Let $(\Omega, \Sigma, \P)$ be a probability space, and let $u \colon \Omega \times X \to \Reals$ be a stochastic process such that $u(\omega, \quark) \in \dom(T)$ $\P$-a.s.
	Let $v \colon \Omega \times X \to \Reals$ be given $\P$-a.e.~by $v(\omega, x) \defeq (T u(\omega, \quark)) (x)$.
	\begin{enumerate}[label=(\alph*)]
		\item
		\label{item:main_mean}
		(Mean function.)
		Suppose that $u$ and $v$ have Bochner mean functions $m_{u} \in \U$ and $m_{v} \in \V$, i.e.~$\int_{\Omega} \norm{ u(\omega, \quark) }_{\U} \, \P(\rd \omega)$ and $\int_{\Omega} \norm{ v(\omega, \quark) }_{\V} \, \P(\rd \omega)$ are finite.
		Then $m_{v} = T m_{u}$.

		\item
		\label{item:main_cov}
		(Covariance function.)
		Suppose in addition to \ref{item:main_mean} that $T$ has a densely-defined adjoint $T^{\ast}$ and that $\int_{\Omega} \norm{ u(\omega, \quark) }_{\U}^{2} \, \P(\rd \omega)$ and $\int_{\Omega} \norm{ v(\omega, \quark) }_{\V}^{2} \, \P(\rd \omega)$ are finite.
		Then $u$ and $v$ have covariance functions $k_{u}$ and $k_{v}$ respectively, and $k_{v} = T_{1} T_{2} k_{u} = T_{2} T_{1} k_{u}$, where $T_{1}$ (resp.~$T_{2}$) denotes the densely-defined closure of the tensor product operator $T \otimes I$ (resp.~$I \otimes T$).

		\item
		\label{item:main_GP}
		(Gaussianity.)
		Suppose in addition to \ref{item:main_mean} and \ref{item:main_cov} that the linear span of all point evaluation functionals is weakly-$\ast$ sequentially dense\footnote{That is, each $\ell \in \U'$ must admit a sequence of finite linear combinations of point evaluations that converges weakly-$\ast$ in $\U'$ to $\ell$.
		Some sufficient conditions for this are discussed in \Cref{lem:GP_to_GRV_revised}, but we note here that this criterion is satisfied by any separable reproducing kernel Hilbert space, any separable and quasi-reflexive reproducing kernel Banach space in the sense of \citet{LinZhangZhang2022}, and any separable Banach space of compactly-supported continuous functions (with respect to the supremum norm).} in $\U'$, that \mbox{$u \sim \GP(m_{u}, k_{u})$}, and that \mbox{$\int_{\Omega} \norm{ v(\omega, \quark) }_{\V}^{n} \, \P(\rd \omega)$} is finite for each $n \in \Naturals$.
		Then \mbox{$v \sim \GP(T m_{u}, T_{1} T_{2} k_{u})$}.
	\end{enumerate}
\end{theorem}

We find it likely that \Cref{thm:main} is not wholly new;
on the other hand, we have not found it stated straightforwardly and proved rigorously at this level of generality in the literature.

\begin{remark}[Necessity of assumptions]
	It is necessary for $\U$ and $\V$ to be Banach spaces to even define the Bochner integral, and our appeal to Hille's theorem requires $u$ etc.~to be Bochner integrable and $T$ to be closed.
	Without these assumptions, standard counterexamples to the principle of differentiation under the integral sign can be used to invalidate claim \ref{item:main_mean};
	cf.~\Cref{example:Talvila}.
	In \ref{item:main_cov}, the densely-defined adjoint of $T$ ensures that $T_{i}$ is closed, which is again needed for Hille's theorem.
	Finally, in \ref{item:main_GP}, continuity of point evaluation is needed to ensure that the point evaluations are weakly-$\ast$ dense in $\U'$, which allows $u$ to be seen as a bona fide $\U$-valued Gaussian random variable with finite moments of all orders.
	Finiteness of the moments of $T u$ must be assumed separately from those of $u$ in order to use Hille's theorem, since $T$ is not necessarily a bounded operator.
\end{remark}

The necessary notation and preliminaries are set out in \Cref{sec:setup}, and readers familiar with tensor products, unbounded operators, stochastic processes, and Bochner integration may skip this section.
With this basis, \Cref{sec:proof_of_main} gives the proof of \Cref{thm:main}.
\Cref{sec:PDE} gives an extended example, applying \Cref{thm:main} to an elliptic partial differential operator.

\section{Setup and notation}
\label{sec:setup}

Let $(\Omega, \Sigma, \P)$ be a probability space that is rich enough to serve as a common domain of definition for all random variables and stochastic processes under consideration.
Calligraphic letters $\U$, $\V$, etc.~will denote real Banach spaces, and $X$ will denote a non-empty index set.
The topological dual space $\U'$ consists of all bounded/continuous linear functionals from $\U$ to $\Reals$, and we write $\dualprod{ \ell }{ f } \in \Reals$ for the action of $\ell \in \U'$ upon $f \in \U$.

\subsection{Tensor product functions and tensor product spaces}

Given functions $u_{1}, u_{2} \colon X \to \Reals$, define $u_{1} \otimes u_{2} \colon X^{2} \to \Reals$ by $(u_{1} \otimes u_{2}) (x_{1}, x_{2}) \defeq u_{1} (x_{1}) u_{2} (x_{2})$.
When $\U, \V \subseteq \Reals^{X}$ are Banach spaces, we write $\U \otimes \V \defeq \SPAN \set{ u \otimes v }{ u \in \U , v \in \V }$ and $\U \hatotimes \V$ for the Banach completion of $\U \otimes \V$ with respect to some norm satisfying $\norm{ u \otimes v } \leq C \norm{ u }_{\U} \norm{ v }_{\V}$ for some constant $C > 0$ and all $u \in \U$, $v \in \V$;
this may be, but does not have to be, the projective tensor product $\U \hatotimes_{\pi} \V$ of $\U$ and $\V$ \citep[Chapter~2]{Ryan2002}.

\subsection{Stochastic processes}

Given a stochastic process $u \colon \Omega \times X \to \Reals$, i.e.~a random function $u \colon \Omega \to \Reals^{X}$, its \defterm{mean function} $m_{u} \colon X \to \Reals$ is defined pointwise by
\begin{equation}
	\label{eq:mean_function}
	m_{u}(x) \defeq \E [ u(x) ] = \int_{\Omega} u(\omega, x) \, \P (\rd \omega)
\end{equation}
and its \defterm{covariance function} $k_{u} \colon X \times X \to \Reals$ is defined pointwise by
\begin{align*}
	k_{u}(x_{1}, x_{2}) \defeq \Cov [ u(x_{1}), u(x_{2}) ] \defeq \E \left[ \vphantom{\big|} (u(x_{1}) - m_{u}(x_{1})) (u(x_{2}) - m_{u}(x_{2})) \right] .
\end{align*}
The process $u$ is called a \defterm{Gaussian process} (GP) with mean function $m_{u}$ and covariance function $k_{u}$, written $u \sim \GP(m_{u}, k_{u})$, if, for every finite set $\{ x_{1}, \dots, x_{J} \} \subseteq X$,
\begin{align*}
	\begin{pmatrix} u(x_{1}) \\ \vdots \\ u(x_{J}) \end{pmatrix}
	\sim
	\Normal \left(
		\begin{pmatrix} m_{u}(x_{1}) \\ \vdots \\ m_{u}(x_{J}) \end{pmatrix}
		,
		\begin{pmatrix}
			k_{u}(x_{1}, x_{1}) & \cdots & k_{u}(x_{1}, x_{J}) \\
			\vdots & \ddots & \vdots \\
			k_{u}(x_{J}, x_{1}) & \cdots & k_{u}(x_{J}, x_{J})
		\end{pmatrix}
	\right) .
\end{align*}

GPs are closely related to Gaussian measures and Gaussian random variables \citep[Section~2.3]{Bogachev1998}, and the following lemma, proved in \Cref{app:technical}, is needed for \Cref{thm:main}\ref{item:main_GP}.
We emphasise that the conditions given in \Cref{lem:GP_to_GRV_revised}\ref{item:GP_to_GRV_revised_2} are sufficient but not necessary for the conclusion of \Cref{lem:GP_to_GRV_revised}\ref{item:GP_to_GRV_revised_3}, and similar results have recently been provided by \citet[Theorem~B.6 et seq.]{PfortnerEtAl2022}.

\begin{lemma}
	\label{lem:GP_to_GRV_revised}
	Let $\U \subseteq \Reals^{X}$ be a Banach space.
	Let $\delta_{x}$ denote the point evaluation functional at $x \in X$, i.e.~$\dualprod{ \delta_{x} }{ f } \defeq f(x)$ for $f \in \U$, and suppose that $\delta_{x} \in \U'$ for each $x \in X$.
	Let $\Delta \defeq \SPAN \{ \delta_{x} \}_{ x \in X }$ and let $\Delta_{(1)}$ denote the set of all weak-$\ast$ limits of sequences in $\Delta$.
	\begin{enumerate}[label=(\alph*)]
		\item
		\label{item:GP_to_GRV_revised_1}
		If $\U$ is separable, then the only weakly-$\ast$ sequentially closed subspace of $\U'$ that contains $\Delta$ is $\U'$.
		\item
		\label{item:GP_to_GRV_revised_2}
		If $\U$ is separable and quasi-reflexive (i.e.\ $\U$ has finite codimension in $\U''$ under the canonical embedding) or is a separable Banach space of bounded functions under the supremum norm, such that each $u \in \U$ attains its supremum norm, then  $\U' = \Delta_{(1)}$.
		\item
		\label{item:GP_to_GRV_revised_3}
		Let $u \sim \GP(m_{u}, k_{u})$ be a GP over $X$ with sample paths a.s.~in $\U$.
		If $\U' = \Delta_{(1)}$, then $u$ is a weakly Gaussian random variable in $\U$ in the sense that $\dualprod{ \ell }{ u }$ is Gaussian in $\Reals$ for every $\ell \in \U'$, and is strongly Gaussian (i.e.\ its law is a Gaussian measure on the Borel $\sigma$-algebra) if $u$ is essentially separably valued.
	\end{enumerate}
\end{lemma}

\subsection{Closed operators}
\label{sec:closed_operators}

Thorough treatments of unbounded operators are offered by e.g.~\citet[Section~III.5]{Kato1995} and \citet[Chapter~VIII]{ReedSimon1980}, the latter considering only the Hilbert space case.

A linear operator $T \colon \dom(T) \subseteq \U \to \V$, defined on a linear subspace $\dom(T)$ of $\U$, is a \defterm{closed operator} if, whenever $(u_{n})_{n \in \Naturals} \subset \dom(T)$ is such that $(u_{n}, T u_{n})$ converges to some $(u, v)$, it follows that $u \in \dom(T)$ and $v = T u$;
equivalently, the graph $\graph(T) \defeq \set{ (u, T u) }{ u \in \dom(T) }$ of $T$ is a closed subspace of the direct sum $\U \oplus \V$ with its norm $\norm{ (u, v) }_{\U \oplus \V} \defeq \norm{ u }_{\U} + \norm{ v }_{\V}$.
An operator $T$ is called \defterm{closable} if it can be extended to a closed operator $\overline{T}$, and $\graph(\overline{T})$ is the closure of $\graph(T)$ in $\U \oplus \V$.
Note well that closedness of $T$ is unrelated to closedness of $\dom(T)$.

Any operator $T$ induces a norm on $\dom(T)$ via $\norm{ u }_{T} \defeq \norm{ u }_{\U} + \norm{ T u }_{\V}$, and it is easy to verify that $T$ is a bounded operator with at most unit operator norm from $(\dom(T), \norm{ \quark }_{T})$ into $(\V, \norm{ \quark }_{\V})$.
Whenever $\U$ and $\V$ are Banach and $T$ is a closed operator, $\graph(T)$ is a Banach subspace of $\U \oplus \V$ and $(\dom(T), \norm{ \quark }_{T})$ is also a Banach space.

This note's prototypical example is in fact the standard example of a closed operator defined on a dense but proper subspace, and which cannot be extended to a bounded operator on the whole space:
$\frac{\rd}{\rd x} \colon \dom(\frac{\rd}{\rd x}) \defeq \mathcal{C}^{1}([0, 1]; \Reals) \subsetneq \mathcal{C}^{0}([0, 1]; \Reals) \to \mathcal{C}^{0}([0, 1]; \Reals)$.
A more involved example, involving a partial differential operator and the solution of an elliptic partial differential equation (PDE), is discussed in \Cref{sec:PDE}.

\subsection{Bochner integration}
\label{sec:Bochner_integration}

We now recall the essential elements of Bochner integration of Banach-space-valued random variables;
for a comprehensive treatment, see e.g.~\citet[Section~II.2]{DiestelUhl1977}.

For $E \in \Sigma$, $\one_{E} \colon \Omega \to \Reals$ denotes the \defterm{indicator function} of $E$ given by
\[
	\one_{E}(\omega) \defeq
	\begin{cases}
		1, & \text{if $\omega \in E$,} \\
		0, & \text{if $\omega \notin E$.}
	\end{cases}
\]
A \defterm{simple} $\U$-valued random variable $s \colon \Omega \to \U$ takes the form $s(\omega) = \sum_{j = 1}^{J} u_{j} \one_{E_{j}} (\omega)$ for some $J \in \Naturals$, $u_{j} \in \U$, and $E_{j} \in \Sigma$;
its \defterm{expected value} or \defterm{integral} is
\begin{align*}
	\E [s] \equiv \int_{\Omega} s(\omega) \, \P(\rd \omega) \defeq \sum_{j = 1}^{J} u_{j} \P(E_{j}) \in \U .
\end{align*}

A random variable $u \colon \Omega \to \U$ is called \defterm{strongly measurable} if there exist simple random variables $s_{n}$ such that $\lim_{n \to \infty} s_{n}(\omega) = u(\omega)$ for $\P$-a.e.~$\omega \in \Omega$.
If also $\E [ \norm{ u - s_{n} }_{\U} ] \to 0$ as $n \to \infty$, then $u$ is called \defterm{Bochner integrable} and its \defterm{Bochner integral} is defined by
\begin{align*}
	\E [ u ] \equiv \int_{\Omega} u(\omega) \, \P (\rd \omega) \defeq \lim_{n \to \infty} \E [ s_{n} ] \in \U .
\end{align*}
Equivalently, $u$ is Bochner integrable if and only if it is strongly measurable and $\E [ \norm{ u }_{\U} ]$ is finite.
If $\U$ happens to be a separable space, or if $u$ is essentially separably valued, then Pettis' measurability theorem \citep[Theorem~II.1.2]{DiestelUhl1977} ensures that strong measurability coincides with Borel measurability and also with \defterm{weak measurability}, i.e.~measurability of every $\dualprod{ \ell }{ u } \colon \Omega \to \Reals$ for $\ell \in \U'$, which is often easier to verify in practice.

If $u \colon \Omega \to \U$ is Bochner integrable and $T \colon \U \to \V$ is a bounded operator with values in another Banach space $\V$, then it is easily verified\footnote{The key insight is that, if the operator norm $\norm{ T }_{\textup{op}}$ is finite, then $\E [ \norm{ v }_{\V} ] \leq \norm{ T }_{\textup{op}} \E [ \norm{ u }_{\U} ]$.} that $v \defeq T u$ is Bochner integrable and $\E [ v ] = T \E [ u ]$.
Our interest, however, lies in unbounded operators, for which the following result will be essential:

\begin{theorem}[Hille's theorem; {\citealp[Theorem~II.2.6]{DiestelUhl1977}}]
	\label{thm:Hille}
	Let $\U$ and $\V$ be real Banach spaces and let $T \colon \dom(T) \subseteq \U \to \V$ be a closed operator.
	If $u \colon \Omega \to \U$ and $T u \colon \Omega \to \V$ are both Bochner integrable, then $\E [ T u ] = T \E [ u ] \in \V$.
\end{theorem}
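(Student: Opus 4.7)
The plan is to apply Hille's theorem (\Cref{thm:Hille}) once for each item, with the second and third items requiring tensor-product extensions of $T$.

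For \ref{item:main_mean}, the hypotheses give that $u$ and $v = Tu$ are Bochner integrable as $\U$- and $\V$-valued random variables, respectively, so Hille's theorem applied to the closed operator $T$ immediately yields $m_v = \E[Tu] = T\E[u] = T m_u$.

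For \ref{item:main_cov}, I would first pass to the centred processes $\tilde u \defeq u - m_u$ and $\tilde v \defeq v - m_v$, noting via \ref{item:main_mean} that $\tilde v = T \tilde u$ almost surely. The next step is to lift $\tilde u \otimes \tilde u$ to a $\U \hatotimes \U$-valued random variable whose Bochner expectation is exactly $k_u$; Bochner integrability follows from the cross-norm bound $\norm{ \tilde u \otimes \tilde u } \le C \norm{ \tilde u }_\U^2$ and the second-moment hypothesis. The adjoint-density hypothesis ensures that $T \otimes I$ and $I \otimes T$ are closable, so $T_1, T_2$ are well-defined closed operators. One now applies Hille twice in sequence: first, $(I \otimes T)(\tilde u \otimes \tilde u) = \tilde u \otimes T \tilde u$ is Bochner integrable (by Cauchy--Schwarz, $\E[\norm{\tilde u}_\U \norm{\tilde v}_\V] \le (\E \norm{\tilde u}_\U^2)^{1/2} (\E \norm{\tilde v}_\V^2)^{1/2} < \infty$) and $\tilde u \otimes \tilde u \in \dom(I \otimes T)$ almost surely, so Hille gives $T_2 k_u = \E[\tilde u \otimes T \tilde u]$; second, applying Hille with $T \otimes I$ gives $T_1 T_2 k_u = \E[T \tilde u \otimes T \tilde u] = k_v$. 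Swapping the order of the two Hille applications likewise gives $k_v = T_2 T_1 k_u$, yielding the claimed commutativity on $k_u$.

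For \ref{item:main_GP}, by Kolmogorov consistency it suffices to show that each finite-dimensional marginal $(v(x_1), \ldots, v(x_J))$ is jointly Gaussian with the claimed mean and covariance. Since all moments of $\tilde v$ exist by the hypothesis, I would verify this via Wick's theorem: showing that the $n$-point function $\E[\tilde v(x_{i_1}) \cdots \tilde v(x_{i_n})]$ vanishes for odd $n$ and equals a sum over pair partitions of products of two-point functions $k_v$ for even $n$. The mechanism is an $n$-fold iteration of the tensor argument of \ref{item:main_cov}: one forms $\tilde u^{\otimes n}$ in $\U^{\hatotimes n}$, checks Bochner integrability via H\"older's inequality together with the $n$-th moment hypothesis, applies Hille's theorem in each tensor slot using the closed operator $\overline{I^{\otimes(i-1)} \otimes T \otimes I^{\otimes(n-i)}}$, and so concludes that $\E[\tilde v^{\otimes n}]$ is obtained by $n$ such applications acting on $\E[\tilde u^{\otimes n}]$. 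Gaussianity of $u$ gives Wick's formula for the latter in terms of $k_u$'s, and \ref{item:main_cov} converts each resulting pair of slotwise $T$'s acting on $k_u$ into $k_v$, producing Wick's formula for $\tilde v$.

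The main obstacle throughout is the bookkeeping of domains and closability for the tensor-product extensions $T \otimes I$, $I \otimes T$, and their $n$-fold analogues, together with the repeated verifications of Bochner integrability before each invocation of Hille's theorem; the hypothesis that $T^{\ast}$ is densely defined is precisely what makes these tensor extensions closable.
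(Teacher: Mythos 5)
Your proposal does not address the statement it was meant to prove. The statement in question is Hille's theorem itself (\Cref{thm:Hille}): for a closed operator $T$ and Bochner-integrable $u$ and $T u$, one has $\E[Tu] = T \E[u]$. What you have written is instead an outline of the proof of the paper's main result (\Cref{thm:main}), \emph{using} Hille's theorem as a black box in every step. Read as a proof of \Cref{thm:Hille}, your argument is circular from the first sentence (``apply Hille's theorem once for each item''); read as a proof of \Cref{thm:main}, it is essentially the paper's own argument (the paper also centres the process, lifts to $\U \hatotimes \U$, uses the densely-defined adjoint to obtain closability of $T \otimes I$ and $I \otimes T$ via Kato, and handles Gaussianity through $n$-fold tensor powers --- though via vanishing of cumulants of order $n \geq 3$ rather than Wick's theorem, which is an equivalent route). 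Note also that the paper does not prove \Cref{thm:Hille} at all: it cites \citet[Theorem~II.2.6]{DiestelUhl1977}.

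If you do want to supply a proof of \Cref{thm:Hille}, the standard argument is short and worth knowing: the map $\omega \mapsto (u(\omega), T u(\omega))$ is a Bochner-integrable random variable with values in $\U \oplus \V$, and its integral is $(\E[u], \E[Tu])$. Almost surely it takes values in the graph of $T$, which is a closed linear subspace of $\U \oplus \V$ precisely because $T$ is closed. Any bounded linear functional $\varphi$ on $\U \oplus \V$ that vanishes on the graph satisfies $\varphi\bigl( (\E[u], \E[Tu]) \bigr) = \E\bigl[ \varphi\bigl( (u, Tu) \bigr) \bigr] = 0$, since scalar-valued integration commutes with bounded functionals applied to Bochner integrals; by Hahn--Banach, $(\E[u], \E[Tu])$ therefore lies in the graph, i.e.\ $\E[u] \in \dom(T)$ and $T \E[u] = \E[Tu]$. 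The closedness hypothesis is exactly what makes the graph a closed subspace, and the two integrability hypotheses are what make the $\U \oplus \V$-valued integral exist; neither can be dropped.
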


Note that, when $\U \subseteq \Reals^{X}$, $u$ need not be Bochner integrable in order to define its mean function pointwise on $X$ as is done in \eqref{eq:mean_function}.
However, if $m_{u}$ is the mean of $u$ in merely this weak sense, then one will have no recourse to Hille's theorem, which is essential for our proof of \Cref{thm:main}.

\begin{remark}[TANSTAAFL!]
	\label{rmk:TANSTAAFL}
	The hypotheses of Hille's theorem are equivalent to asking that $u$ be Bochner integrable with respect to $\norm{ \quark }_{T}$ and that $(\dom(T), \norm{ \quark }_{T})$ be a Banach space, on which $T$ is automatically a bounded $\V$-valued operator.
	However, there is ``no free lunch'':
	either one works with the weaker norm $\norm{ \quark }_{\U}$ at the cost of working with a closed, unbounded operator;
	or one works with a $(\norm{ \quark }_{T}, \norm{ \quark }_{\V})$-bounded operator at the cost of needing $\norm{ \quark }_{T}$ to be a Banach norm, which holds only when $T$ is closed anyway.
\end{remark}

\section{Proof of \texorpdfstring{\Cref{thm:main}}{Theorem \ref{thm:main}}}
\label{sec:proof_of_main}

We first establish claim \ref{item:main_mean} regarding the mean function.

\begin{proof}[Proof of \Cref{thm:main}\ref{item:main_mean}]
	Since the means $m_{u}$ and $m_{v}$ exist in the Bochner sense and the operator $T$ is closed,
	\begin{align*}
		m_{v}
		= \E [ v ]
		= \E [ T u ]
		= T \E [ u ]
		= T m_{u} ,
	\end{align*}
	as claimed, where the equality $\E [ T u ] = T \E [ u ]$ follows from Hille's theorem.
\end{proof}

Hille's theorem can be seen as a generalisation of the principle of differentiation under the integral sign, and so the standard counterexamples for differentiation under the integral sign can be used to show that \Cref{thm:main}\ref{item:main_mean} fails when, for example, $T u$ is not Bochner integrable:

\begin{example}[\citealp{Talvila2001a, Talvila2001b}]
	\label{example:Talvila}
	Let $X \defeq \Reals$, $\Omega \defeq (0, \infty)$, and let $\P$ be the exponential distribution on $\Omega$ with parameter $1$.
	Consider the process $u \colon \Omega \times X \to \Reals$ given by
	\[
		u(\omega, x) \defeq \cos ( \omega x ) \sin ( \omega^{2} ) \exp ( \omega ) .
	\]
	Then $u$ has the mean function $m_{u} \colon X \to \Reals$ given by
	\[
		m_{u}(x) = \int_{0}^{\infty} \cos ( \omega x ) \sin ( \omega^{2} ) \, \rd \omega = \frac{1}{2} \sqrt{ \frac{\pi}{2} } \left( \cos \frac{x^{2}}{4} - \sin \frac{x^{2}}{4} \right) .
	\]
	Consider the image $v$ of $u$ under $T \defeq \frac{\rd}{\rd x}$, i.e.~$v(\omega, x) = - \omega \sin ( \omega x ) \sin ( \omega^{2} ) \exp ( \omega )$.
	The process $v$ has no mean function, since $\int_{0}^{\infty} \omega \sin ( \omega x ) \sin ( \omega^{2} ) \, \rd \omega$ diverges for every $x \in X$;
	on the other hand, $m_{u}$ is differentiable with
	\[
		\frac{\rd}{\rd x} m_{u}(x) = - \frac{x}{4} \sqrt{ \frac{\pi}{2} } \left( \sin \frac{x^{2}}{4} + \cos \frac{x^{2}}{4} \right) .
	\]
\end{example}

Before proving claim \ref{item:main_cov} regarding the covariance kernel, we carefully define the following versions $T_{j}$ of $T$ that act on one argument of a function of two arguments, in the sense that $T_{j}$ plays the role of $\frac{\partial}{\partial x_{j}}$ when $T = \frac{\rd}{\rd x}$ on $\dom(T) = \mathcal{C}^{1}([0, 1]; \Reals)$.

Consider the tensor product linear operator $T \otimes I$, which sends each elementary product function $u_{1} \otimes u_{2}$ to $(T u_{1}) \otimes u_{2}$.
By assumption, $T$ has a densely-defined adjoint $T^{\ast}$, and so $T^{\ast} \otimes I$ is a densely-defined adjoint for $T \otimes I$, which implies that $T \otimes I$ is closable \citep[Theorem~III.5.28]{Kato1995}.
Thus, let $T_{1} \defeq \overline{ T \otimes I }$ be its closure, which is densely defined in $\U \hatotimes \U$ and takes values in $\V \hatotimes \U$.
As a small abuse of notation, we also write $T_{1}$ for the operator defined in the same way within $\U \hatotimes \V$ and taking values in $\V \hatotimes \V$.
The closed and densely-defined operator $T_{2} \defeq \overline{I \otimes T}$ is defined similarly.

One sufficient condition for $T$ to have a densely-defined adjoint is for it to be closed\footnote{Or even closable, but closedness of $T$ was already assumed for \Cref{thm:main}\ref{item:main_mean}.} and densely defined in a reflexive space $\U$ \citep[Theorem~III.5.29]{Kato1995}.
In other non-reflexive cases of interest, the existence of a densely-defined adjoint $T^{\ast}$ can be verified directly.
For a simple example, consider $\U \defeq \mathcal{C}_{0}^{0}([0, 1]; \Reals)$ and $T \defeq \frac{\rd}{\rd x}$ defined on $\dom(T) \defeq \mathcal{C}_{0}^{1}([0, 1]; \Reals)$.
In this case, the topological dual space $\U'$ is the space $\mathfrak{M}([0, 1])$ of finite-variation signed Radon measures on $[0, 1]$, equipped with the total variation norm.
A quick integration by parts, simplified by the zero boundary conditions, shows that $T^{\ast} \mu = - \rho' \, \rd x$ if $\mu \in \mathfrak{M}([0, 1])$ has differentiable Lebesgue density $\rho$, the collection of such $\mu$ being dense in $\mathfrak{M}([0, 1])$.

We are now in a position to establish claim \ref{item:main_cov}.

\begin{proof}[Proof of \Cref{thm:main}\ref{item:main_cov}]
	The square-integrability assumptions on $u$ and $T u$ and the assumption that $\norm{ u \otimes v } \leq C \norm{ u }_{\U} \norm{ v }_{\V}$ together imply that $T_{j} u \otimes u$ is also Bochner integrable;
	the same is true after centering the processes by subtracting their means.
	Thus, $u$ and $v$ have covariance functions $k_{u}$ and $k_{v}$ that are well-defined as Bochner integrals.
	The discussion above has already established the closedness of $T_{1}$ and $T_{2}$.
	Thus, the hypotheses needed for the two applications of Hille's theorem below are satisfied, and we have
	\begin{align*}
		k_{v}
		& = \E \left[ \vphantom{\big|} ( v - m_{v} ) \otimes ( v - m_{v} ) \right] \\
		& = \E \left[ \vphantom{\big|} ( T u - T m_{u} ) \otimes ( T u - T m_{u} ) \right] \\
		& = \E \left[ \vphantom{\big|} T_{1} \bigl( ( u - m_{u} ) \otimes ( T u - T m_{u} ) \bigr) \right] \\
		& = T_{1} \E \left[ \vphantom{\big|} ( u - m_{u} ) \otimes ( T u - T m_{u} ) \right] & & \text{(Hille's theorem for $T_{1}$)} \\
		& = T_{1} \E \left[ \vphantom{\big|} T_{2} \bigl( ( u - m_{u} ) \otimes ( u - m_{u} ) \bigr) \right] \\
		& = T_{1} T_{2} \E \left[ \vphantom{\big|} ( u - m_{u} ) \otimes ( u - m_{u} ) \right] & & \text{(Hille's theorem for $T_{2}$)} \\
		& = T_{1} T_{2} k_{u} ,
	\end{align*}
	as claimed.
	Finally, note that we could equally well have applied Hille's theorem first to $T_{2}$ and then to $T_{1}$, thus showing that $k_{v} = T_{2} T_{1} k_{u}$.
\end{proof}

Finally, we treat claim \ref{item:main_GP} regarding Gaussianity of $v$.
The following argument using cumulants echoes that of \citet{LangeHegermann2021}, but is more careful in its appeal to Hille's theorem to justify the interchange of $T$-like operators and Bochner expectations.

For $n \in \Naturals$, define the \defterm{$n$\textsuperscript{th} cumulant function} $\kappa_{u}^{(n)} \colon X^{n} \to \Reals$ by
\begin{equation}
	\label{eq:cumulant_pointwise}
	\kappa_{u}^{(n)} ( x_{1}, \dots, x_{n} ) \defeq \sum_{P \in \partitions(n)} (-1)^{\absval{ P } - 1} ( \absval{ P } - 1 )! \prod_{S \in P} \E \left[ \prod_{i \in S} u(x_{i}) \right] ,
\end{equation}
where $\partitions(n)$ denotes the set of all partitions of $\{ 1, \dots, n \}$.
We will use the fact that a random vector is normally distributed if and only if its cumulants of order $n \geq 3$ vanish \citep{Marcinkiewicz1939}, and hence that a stochastic process $u$ on $X$ is a GP if and only if, for $n \geq 3$, $\kappa_{u}^{(n)}$ is the zero function on $X^{n}$.
Furthermore, in order to relate $\kappa_{u}^{(n)}$ and $\kappa_{T u}^{(n)}$, we will interpret \eqref{eq:cumulant_pointwise} not pointwise but as a Bochner expectation of $\U \hatotimes \cdots \hatotimes \U$-valued random variables, i.e.
\begin{align*}
	\kappa_{u}^{(n)} \defeq \sum_{P \in \partitions(n)} (-1)^{\absval{ P } - 1} ( \absval{ P } - 1 )! \prod_{S \in P} \E \left[ u^{\otimes S} \right] ,
\end{align*}
where, for $S \subseteq \{ 1, \dots, n \}$, $u^{\otimes S} \colon X^{n} \to \Reals$ is the function
\[
	u^{\otimes S} (x_{1}, \dots, x_{n}) \defeq \prod_{i = 1}^{n} z_{i}
	\quad
	\text{with}
	\quad
	z_{i} \defeq
	\begin{cases}
		u(x_{i}), & \text{if $i \in S$,} \\
		1, & \text{if $i \notin S$.}
	\end{cases}
\]
Furthermore, employing the same closability arguments as before, we write
\[
	T_{S} \defeq \overline{ \bigotimes_{i = 1}^{n} L_{i} }
	\quad
	\text{with}
	\quad
	L_{i} \defeq
	\begin{cases}
		T_{i}, & \text{if $i \in S$,} \\
		I, & \text{if $i \notin S$.}
	\end{cases}
\]
For example, in our prototypical setting, $T_{\{ 2, 3 \}}$ plays the role of $\frac{\partial^{2}}{\partial x_{2} \partial x_{3}}$.

\begin{lemma}
	\label{lem:cumulants_of_u_and_Tu}
	Let $T \colon \dom(T) \subset \U \to \V$ be closed and densely defined, with a densely-defined adjoint $T^{\ast}$.
	Let $n \in \Naturals$ be such that both $\E [ \norm{ u }_{\U}^{n} ]$ and $\E [ \norm{ T u }_{\V}^{n} ]$ are finite.
	Then
	\begin{equation}
		\label{eq:cumulants_of_u_and_Tu}
		\kappa_{T u}^{(n)} = T_{\{ 1, \dots, n \}} \kappa_{u}^{(n)} .
	\end{equation}
\end{lemma}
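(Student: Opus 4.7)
My plan is to reduce \eqref{eq:cumulants_of_u_and_Tu} to a partition-by-partition, factor-by-factor application of Hille's theorem, in the spirit of the proof of \Cref{thm:main}\ref{item:main_cov}.

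First I would verify Bochner integrability of each relevant factor. For $S \subseteq \{ 1, \ldots, n \}$, iterating the cross-norm bound $\norm{ a \otimes b } \leq C \norm{ a } \norm{ b }$ yields $\norm{ u^{\otimes S} } \leq C^{\absval{S} - 1} \norm{ u }_{\U}^{\absval{S}}$ in the appropriate $\U \hatotimes \cdots \hatotimes \U$; combined with $\E [ \norm{ u }_{\U}^{n} ] < \infty$ and $\absval{S} \leq n$, this shows that $u^{\otimes S}$ is Bochner integrable, and the analogous argument using $\E [ \norm{ T u }_{\V}^{n} ] < \infty$ handles $(T u)^{\otimes S}$. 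By the same closability argument already invoked for $T_{1}$ and $T_{2}$, each $T_{S}$ is a well-defined closed operator. Pathwise on the event $\{ u(\omega) \in \dom(T) \}$ one computes directly that $T_{S} u(\omega)^{\otimes S} = (T u(\omega))^{\otimes S}$, so \Cref{thm:Hille} applied to $T_{S}$ yields
\[
	T_{S} \E [ u^{\otimes S} ] = \E [ T_{S} u^{\otimes S} ] = \E [ (T u)^{\otimes S} ] .
\]

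The pivotal step is the partition-wise factorisation. For $P = \{ S_{1}, \ldots, S_{k} \} \in \partitions(n)$, each factor $\E [ u^{\otimes S_{j}} ]$ depends non-trivially only on the coordinates indexed by $S_{j}$, so the product $\prod_{S \in P} \E [ u^{\otimes S} ]$, viewed as an element of the $n$-fold space $\U \hatotimes \cdots \hatotimes \U$, is an elementary tensor across the blocks of $P$. Consequently $T_{\{ 1, \ldots, n \}}$ acts on it as $T_{S_{1}} \otimes \cdots \otimes T_{S_{k}}$, giving
\[
	T_{\{ 1, \ldots, n \}} \prod_{S \in P} \E [ u^{\otimes S} ] = \prod_{S \in P} T_{S} \E [ u^{\otimes S} ] = \prod_{S \in P} \E [ (T u)^{\otimes S} ] .
\]
Summing over $P$ with the Möbius weights $(-1)^{\absval{P} - 1} (\absval{P} - 1)!$ then recovers $\kappa_{T u}^{(n)}$ on the right and $T_{\{ 1, \ldots, n \}} \kappa_{u}^{(n)}$ on the left, completing the proof.

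I expect the main obstacle to be justifying the tensor-factorisation identity $T_{\{ 1, \ldots, n \}} = T_{S_{1}} \otimes \cdots \otimes T_{S_{k}}$ on elements that split along the partition $P$. On the algebraic pre-closure operator $T^{\otimes n}$ this identity is immediate from definitions, but transporting it through the closure procedure requires a density-and-closedness argument: one approximates each $\E [ u^{\otimes S_{j}} ]$ by elementary tensors in $\dom(T)^{\otimes \absval{S_{j}}}$, applies the pre-closure operators factorwise, and passes to the limit using the cross-norm bounds from the first step. This is the natural $n$-fold, multi-block generalisation of the closability arguments already employed for $T_{1}$ and $T_{2}$.
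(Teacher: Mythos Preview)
Your proposal is correct and follows essentially the same route as the paper: expand the cumulant formula, identify $(T u)^{\otimes S} = T_{S} u^{\otimes S}$, pull $T_{S}$ out of each expectation via Hille's theorem, then use that the blocks of $P$ partition $\{1,\dots,n\}$ to recombine $\prod_{S \in P} T_{S}$ into $T_{\{1,\dots,n\}}$ and factor it through the sum by linearity. If anything, you are more explicit than the paper about the Bochner-integrability bounds and the closure issue behind the blockwise identity $T_{\{1,\dots,n\}} = \bigotimes_{j} T_{S_{j}}$, which the paper treats as immediate from $\biguplus_{S \in P} S = \{1,\dots,n\}$.
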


\begin{proof}
	Note that the assumptions on the moments of $u$ and $T u$ are necessary to ensure the Bochner integrability of the tensor product functions and applications of Hille's theorem appearing below;
	if $T$ were a bounded operator, then finiteness of $\E [ \norm{ u }_{\U}^{n} ]$ would imply finiteness of $\E [ \norm{ T u }_{\V}^{n} ]$.

	Calculating directly, we have
	\begin{align*}
		\kappa_{T u}^{(n)}
		& = \sum_{P \in \partitions(n)} (-1)^{\absval{ P } - 1} ( \absval{ P } - 1 )! \prod_{S \in P} \E \left[ (T u)^{\otimes S} \right] && \\
		& = \sum_{P \in \partitions(n)} (-1)^{\absval{ P } - 1} ( \absval{ P } - 1 )! \prod_{S \in P} \E \left[ T_{S} u^{\otimes S} \right] && \textup{(definition of $T_{S}$)} \\
		& = \sum_{P \in \partitions(n)} (-1)^{\absval{ P } - 1} ( \absval{ P } - 1 )! \prod_{S \in P} T_{S} \E \left[ u^{\otimes S}  \right] && \textup{(Hille's theorem)} \\
		& = \sum_{P \in \partitions(n)} (-1)^{\absval{ P } - 1} ( \absval{ P } - 1 )! \, T_{\{ 1, \dots, n \}} \prod_{S \in P} \E \left[ u^{\otimes S}  \right] && \textup{(since $\textstyle \biguplus_{S \in P} S = \{ 1, \dots, n \}$)} \\
		& = T_{\{ 1, \dots, n \}} \kappa_{u}^{(n)} && \textup{(linearity of $T_{\{ 1, \dots, n \}}$)}
	\end{align*}
	and this establishes \eqref{eq:cumulants_of_u_and_Tu}.
	Note that the appeal to Hille's theorem in the third line can be seen either as $\absval{ S }$ applications of Hille's theorem, one for each $T_{\{ i \}}$, $i \in S$, or as one application of Hille's theorem for $T_{S}$.
	Note also that these operators may be taken outside the expectation in any order while yielding expressions that are equal to $\kappa_{T u}^{(n)}$, and thus equal to one another.
\end{proof}

\begin{proof}[Proof of \Cref{thm:main}\ref{item:main_GP}]
	By \Cref{lem:GP_to_GRV_revised}, $u$ is a (strongly) Gaussian random variable in the separable Banach space $\U$.
	Hence, by Fernique's theorem \citep{Fernique1970}, it has finite moments of all orders.
	Furthermore, since $u$ is Gaussian, it follows that $\kappa_{u}^{(n)} \equiv 0$ for $n \geq 3$.

	By assumption, $v \defeq T u$ also has finite moments of all orders, and \Cref{lem:cumulants_of_u_and_Tu} implies that $\kappa_{v}^{(n)} \equiv 0$ for $n \geq 3$, which shows that $v$ is a GP, as claimed, and its mean and covariance functions were determined by \Cref{thm:main}\ref{item:main_mean}--\ref{item:main_cov}.
\end{proof}

\section{Application to GP-based solution of PDEs}
\label{sec:PDE}

As noted in \Cref{sec:introduction,sec:closed_operators}, the standard example of the framework discussed in this note is the derivative operator on a space of continuous paths on an interval in $\Reals$.
We now give a slightly more involved worked example, one that is representative of the application of the above results to the numerical solution of an elliptic PDE using a GP model of the kind common in spatial statistics and machine learning.
This \emph{probabilistic meshless method} (PMM) \citep{CockayneOatesSullivanGirolami2017a,OatesCockayneAykroydGirolami2019} can be seen as a form of generalised kriging \citep{Krige1951,Matheron1963} with derivative-based observations, and the posterior mean function \eqref{eq:PMM_post_mean} of the PMM coincides with the symmetric collocation method \citep{CialencoEtAl2012,Fasshauer1999}.
The additional benefit of the GP representation is that the posterior covariance structure \eqref{eq:PMM_post_cov} gives a meaningful estimate of the discretisation uncertainty vis-\`a-vis the PDE solution \citep{HennigOsborneKersting2022}.

Let $X \subset \Reals^{d}$ be a bounded, open domain with smooth boundary $\partial X$ and compact closure $\overline{X}$.
Let $L$ be a second-order linear differential operator of the form
\begin{align*}
	L u \defeq - \nabla \cdot ( a \nabla u ) + b \cdot \nabla u - \nabla \cdot ( b u ) + c u
\end{align*}
with $\mathcal{C}^{\infty}$ coefficient functions $a \colon \overline{X} \to \Reals^{d \times d}$, $b \colon \overline{X} \to \Reals^{d}$, and $c \colon \overline{X} \to \Reals$.
Assume that $a(x)$ is symmetric for each $x \in X$, so that $L$ is formally self-adjoint with respect to the usual $L^{2}$ inner product, and assume also that the eigenvalues of $a(x)$ are bounded below, uniformly in $x$, by $\lambda_{\min} > 0$, so that $L$ is uniformly elliptic.

In the PMM, we are interested in forming a posterior GP whose sample paths are approximate solutions to the PDE
\begin{equation}
	\label{eq:elliptic_pde_1}
	\begin{cases}
		L u = f & \text{in $X$,} \\
		B u = g & \text{on $\partial X$,}
	\end{cases}
\end{equation}
for prescribed $f \colon X \to \Reals$ and $g \colon \partial X \to \Reals$, where $B \colon u \mapsto u|_{\partial X}$ denotes the Dirichlet boundary value / Sobolev trace operator.
The sense in which the GP paths will be approximate solutions, in the spirit of collocation methods \citep{CialencoEtAl2012,Fasshauer1999}, is that the GP will be conditioned so that \eqref{eq:elliptic_pde_1} holds on a finite set of points in $\overline{X}$.

For the next paragraph we reason somewhat informally.
Fix finite point sets $\bs{\xi}^{\circ} \defeq \{ \xi_{i}^{\circ} \}_{i = 1}^{I} \subset X$ and $\bs{\xi}^{\partial} \defeq \{ \xi_{j}^{\partial} \}_{j = 1}^{J} \subset \partial X$, let $\bs{\xi} \defeq \bs{\xi}^{\circ} \cup \bs{\xi}^{\partial}$, and let
\begin{align*}
	\bs{y} & \defeq
	\begin{bmatrix}
		\vphantom{\Big|}
		f(\xi_{1}^{\circ}) &
		\cdots &
		f(\xi_{I}^{\circ}) &
		g(\xi_{1}^{\partial}) &
		\cdots &
		g(\xi_{J}^{\partial})
	\end{bmatrix}^{\intercal}
	 \in \Reals^{(I + J) \times 1} .
\end{align*}
After positing a centred GP prior $u \sim \GP(0, k)$, the PMM conditions this GP on the linear observation $(L u (\bs{\xi}^{\circ}), B u (\bs{\xi}^{\partial}) ) = \bs{y}$;
the usual conditioning procedure for GPs yields a the PMM solution $\widetilde{u} \sim \GP(\widetilde{m}, \widetilde{k})$ to the PDE \eqref{eq:elliptic_pde_1}, with posterior mean $\widetilde{m} \colon X \to \Reals$ and covariance $\widetilde{k} \colon X \times X \to \Reals$ for $v$ given by
\begin{align}
	\label{eq:PMM_post_mean}
	\widetilde{m}(x) & \defeq \bigl( T_{2} k(x, \bs{\xi}) \bigr) \bigl( T_{1} T_{2} k (\bs{\xi}, \bs{\xi}) \bigr)^{-1} \bs{y} , \\
	\label{eq:PMM_post_cov}
	\widetilde{k}(x, x') & \defeq k(x, x') - \bigl( T_{2} k(x, \bs{\xi}) \bigr) \bigl( T_{1} T_{2} k (\bs{\xi}, \bs{\xi}) \bigr)^{-1} \bigl( T_{1} k(\bs{\xi}, x') \bigr) ,
\end{align}
where $T_{1} k(\bs{\xi}, x) \in \Reals^{(I + J) \times 1}$, $T_{2} k(x, \bs{\xi}) \in \Reals^{1 \times (I + J)}$, and $T_{1} T_{2} k(\bs{\xi}, \bs{\xi}) \in \Reals^{(I + J) \times (I + J)}$ are defined by
\begin{align*}
	T_{1} k(\bs{\xi}, x)
	& \defeq
	\begin{bmatrix}
		\vphantom{\Big|}
		L_{1} k(\xi_{1}^{\circ}, x) &
		\cdots &
		L_{1} k(\xi_{I}^{\circ}, x) &
		k(\xi_{1}^{\partial}, x) &
		\cdots &
		k(\xi_{J}^{\partial}, x)
	\end{bmatrix}^{\intercal} , \\
	T_{2} k(x, \bs{\xi})
	& \defeq
	\begin{bmatrix}
		\vphantom{\Big|}
		L_{2} k(x, \xi_{1}^{\circ}) &
		\cdots &
		L_{2} k(x, \xi_{I}^{\circ}) &
		k(x, \xi_{1}^{\partial}) &
		\cdots &
		k(x, \xi_{J}^{\partial})
	\end{bmatrix} , \\
	\intertext{and}
	T_{1} T_{2} k(\bs{\xi}, \bs{\xi})
	& \defeq
	\hspace{-0.22ex}
	\begin{bmatrix}
		L_{1} L_{2} k(\xi_{1}^{\circ}, \xi_{1}^{\circ}) & \cdots & L_{1} L_{2} k(\xi_{1}^{\circ}, \xi_{I}^{\circ}) & L_{1} k(\xi_{1}^{\circ}, \xi_{1}^{\partial}) & \cdots & L_{1} k(\xi_{1}^{\circ}, \xi_{J}^{\partial}) \\
		\vdots & \ddots & \vdots & \vdots & \ddots & \vdots \\
		L_{1} L_{2} k(\xi_{I}^{\circ}, \xi_{1}^{\circ}) & \cdots & L_{1} L_{2} k(\xi_{I}^{\circ}, \xi_{I}^{\circ}) & L_{1} k(\xi_{I}^{\circ}, \xi_{1}^{\partial}) & \cdots & L_{1} k(\xi_{I}^{\circ}, \xi_{J}^{\partial})\\
		L_{2} k(\xi_{1}^{\partial}, \xi_{1}^{\circ}) & \cdots & L_{2} k(\xi_{1}^{\partial}, \xi_{I}^{\circ}) & k(\xi_{1}^{\partial}, \xi_{1}^{\partial}) & \cdots & k(\xi_{1}^{\partial}, \xi_{J}^{\partial}) \\
		\vdots & \ddots & \vdots & \vdots & \ddots & \vdots \\
		L_{2} k(\xi_{J}^{\partial}, \xi_{1}^{\circ}) & \cdots & L_{2} k(\xi_{J}^{\partial}, \xi_{I}^{\circ}) & k(\xi_{J}^{\partial}, \xi_{1}^{\partial}) & \cdots & k(\xi_{J}^{\partial}, \xi_{J}^{\partial})
	\end{bmatrix} .
\end{align*}
Heuristically, $T$ is the operator $u \mapsto (L u, B u)$.
However, the crucial step missing from the above is to define the operator $T$ properly and to verify that it satisfies the hypotheses of \Cref{thm:main}, so that $T u$ will indeed be a GP to which the conditioning formula can be applied.
The following outlines one such construction, with no attempt at making it the sharpest possible.

Let $\mathcal{H} \subseteq \Reals^{X}$ be a Hilbert space with respect to the usual $L^{2}$ inner product, and assume that point evaluation $\delta_{x}$ at each $x \in X$ is a continuous linear functional on $\mathcal{H}$, i.e.~$\mathcal{H}$ is a reproducing kernel Hilbert space.
Assume also that continuous functions are dense in $\mathcal{H}$.
A good example of this setting is a space of Fourier band-limited functions on $X$.

The operator $u \mapsto (L u, B u)$ is easily defined as an unbounded operator from $\mathcal{H}$ or $L^{2}(X)$ into $L^{2}(X) \oplus L^{2}(\partial X)$ on an $L^{2}$-dense domain such as $\mathcal{C}^{2}(\overline{X}; \Reals)$.
This operator is closable, and even closed in some cases such as $d = 1$ or zero Dirichlet boundary conditions (\citealp[Section~4]{BehrndtLanger2007}; \citealp[Section~III.5.5]{Kato1995}).
Thus, we take $T \colon \dom(T) \subseteq \mathcal{H} \to L^{2}(X) \oplus L^{2}(\partial X)$ to be the closure of this operator, and it possesses a densely defined adjoint as it is defined in a Hilbert space --- although, in fact, one typically shows that $T$ is closable by constructing the densely-defined adjoint explicitly for smooth enough functions.

For simplicity, we assume that the prior covariance kernel $k$ is bounded and $\mathcal{C}^{\infty}$, e.g.~the widely-used squared exponential / radial basis function kernel $k(x, x') \defeq \exp(- \absval{ x - x' }^{2})$ and the rational quadratic kernel $k(x, x') \defeq (1 + \absval{ x - x' }^{2})^{-1}$, since this ensures that the sample paths of $u \sim \GP(0, k)$ are also smooth and a fortiori in $\dom(T)$.
For sharper results connecting the regularity of $k$ to the regularity of draws from $\GP(0, k)$, see \citet[Sections~3 and~4]{Scheuerer2010}.
Also, since this choice of $k$ ensures that $u$ is a.s.~in $\mathcal{C}^{2}(\overline{X}; \Reals)$, Fernique's theorem shows that $\norm{ u }_{\mathcal{C}^{2}}$ is exponentially integrable, yielding finite moments of all orders for $\norm{ u }_{L^{2}(X)}$, $\norm{ L u }_{L^{2}(X)}$, and $\norm{ B u }_{L^{2}(\partial X)}$.

\section*{Acknowledgements}
\addcontentsline{toc}{section}{Acknowledgements}

TM has been supported through the EPSRC Centre for Doctoral Training in Modelling of Heterogeneous Systems (HetSys), grant no.~\href{https://gow.epsrc.ukri.org/NGBOViewGrant.aspx?GrantRef=EP/S022848/1}{EP/S022848/1}.
The authors thank Natha\"el da Costa, Hefin Lambley, Marvin Pf\"ortner, and Daniel Ueltschi for helpful and collegial comments;
we also thank the editors and anonymous peer reviewers for feedback which has improved the manuscript.

For the purpose of open access, the authors have applied a Creative Commons Attribution (CC~BY) licence to any Author Accepted Manuscript version arising.
No data were created or analysed in this study.

\appendix
\section{Proof of \Cref{lem:GP_to_GRV_revised} on Gaussian processes and Gaussian measures}
\label{app:technical}

\Cref{lem:GP_to_GRV_revised} asserts that a Gaussian process on $X$ with its sample paths a.s.~in a ``nice'' Banach space $\U$ induces a Gaussian measure on $\U$.
This is a simple claim, but not a trivial one.
\citet[Proposition~2.3.9]{Bogachev1998} shows that a GP induces a Gaussian measure on the locally convex space $\Reals^{X}$, i.e.~each $\dualprod{ \delta_{x} }{ u } \defeq u(x)$ is Gaussian in $\Reals$.

It has already been noted by \citet[Lemma~A.1]{HaerkoenenEtAl2023} that the continuity of point evaluation makes $\Delta \defeq \SPAN \{ \delta_{x} \}_{x \in X}$ weakly-$\ast$ dense in $\U'$.
\Cref{lem:GP_to_GRV_revised}\ref{item:GP_to_GRV_revised_2} goes beyond this, showing that additionally assuming separability and quasi-reflexivity of $\U$ makes $\Delta$ weakly-$\ast$ \emph{sequentially} dense in $\U'$;
weak-$\ast$ sequential density can also be established ``by hand'' in some other cases of interest such as spaces of compactly-supported continuous functions.
We may then pass from Gaussianity of $\dualprod{ \delta_{x} }{ u }$ to Gaussianity of $\dualprod{ \ell }{ u }$ for $\ell \in \U'$ using a weakly-$\ast$ approximating sequence for $\ell$ and Lebesgue's dominated convergence theorem.
This cannot be done using weak-$\ast$ density alone, since this would yield only an approximating \emph{net} for $\ell$, along which the dominated convergence theorem cannot be applied.

\begin{proof}[Proof of \Cref{lem:GP_to_GRV_revised}.]
	For \ref{item:GP_to_GRV_revised_1}, let $A \subseteq \U'$ be any weakly-$\ast$ sequentially closed subspace containing $\Delta$.
	Since $\U$ is separable, $A$ is also weakly-$\ast$ closed (\citealp[p.124]{Banach1932}; \citealp[Theorem~1.3]{Ostrovskii2001}).
	Thus, if $A \neq \U'$, then there exists $\ell \in \U' \setminus A$ and $u \in \U$ --- which is the dual of $\U'$ with its weak-$\ast$ topology --- such that
	\begin{align}
		\label{eq:HanhBanach1}
		\dualprod{ \ell }{ u } = 1
		\quad
		\text{but}
		\quad
		\dualprod{ a } { u } = 0
		\text{ for all $a \in A$}.
	\end{align}
	However, a special case of \eqref{eq:HanhBanach1} is that
	\begin{align*}
		\dualprod{ \ell }{ u } = 1
		\quad
		\text{but}
		\quad
		\underbrace{ \dualprod{ \delta_{x} }{ u } \equiv u(x) = 0
		\text{ for all $x \in X$} }_{\iff u = 0} ,
	\end{align*}
	which is clearly a contradiction.
	
	For \ref{item:GP_to_GRV_revised_2}, if $\U$ is separable and quasi-reflexive, then the total linear subspace $\Delta \subseteq \U'$ has order at most $1$ \citep[Theorem~2.3(2)]{Ostrovskii2001}.
	That is, $(\Delta_{(1)})_{(1)} = \Delta_{(1)}$, i.e.\ $\Delta_{(1)}$ (or possibly even $\Delta$ itself) is weakly-$\ast$ sequentially closed, and hence equals $\U'$ by \ref{item:GP_to_GRV_revised_1}.
	
	In the case that $\U$ is a separable Banach space of bounded functions with respect to the supremum norm, and that element of $\U$ attains its supremum norm, we appeal to \citet[p.213]{Banach1932} (see also \citet[Theorem~1.4]{Ostrovskii2001}):  $\Delta_{(1)} = \U'$ since there exists $M > 0$ such that, for all $u \in \U$, there exists $\ell \in \Delta$ with $\norm{ \ell } \leq M$ and $\absval{ \dualprod{ \ell }{ u } } = \norm{ u }$;  one can simply take $M = 1$ and $\ell = \delta_{x_{0}(u)}$, where $x_{0}(u) \in X$ is any point at which $u$ attains its supremum norm.
	
	For \ref{item:GP_to_GRV_revised_3}, by \citet[Theorem~2.2.4]{Bogachev1998}, it suffices to show that there exist a (not necessarily continuous) linear function $L$ and a non-negative quadratic form $Q$ on $\U'$ such that
	\begin{align*}
		\log \E [ \exp( \iunit \dualprod{ \ell }{ u } ) ] = \iunit L (\ell) - \frac{1}{2} Q(\ell)
		\quad
		\text{for all $\ell \in \U'$.}
	\end{align*}
	To that end, let $\ell \in \U'$ be arbitrary.
	Since $\U' = \Delta_{(1)}$, there exists a sequence $(\ell_{n})_{n \in \Naturals}$ in $\Delta$ converging weakly-$\ast$ to $\ell$;
	write $\ell_{n} = \sum_{j = 1}^{J(n)} \alpha_{n, j} \delta_{x_{n, j}}$ for some $\{ \alpha_{n, j} \}_{j = 1}^{J(n)} \subset \Reals$ and $\{ x_{n, j} \}_{j = 1}^{J(n)} \subseteq X$.
	Then, appealing to Lebesgue's dominated convergence theorem for the interchange of the limit and the expectation,
	\begin{align*}
		\log \E [ \exp( \iunit \dualprod{ \ell }{ u } ) ]
		& = \log \E \left[ \exp \left( \iunit \Dualprod{ \mathop{\textup{wk-$\ast$-lim}}_{n \to \infty} \ell_{n} }{ u } \right) \right] \\
		& = \lim_{n \to \infty} \log \E [ \exp( \iunit \dualprod{ \ell_{n} }{ u } ) ] \\
		& = \lim_{n \to \infty} \left( \iunit \sum_{j = 1}^{J(n)} \alpha_{n, j} m_{u}(x_{n, j}) - \frac{1}{2} \sum_{j, j' = 1}^{J(n)} \alpha_{n, j} \alpha_{n, j'} k_{u}(x_{n, j}, x_{n, j'}) \right)
	\end{align*}
	It is neither difficult nor fun to verify that the right-hand side has the same limiting value regardless of the approximating sequence $(\ell_{n})_{n \in \Naturals}$ that is used and that approximating sequences may be chosen as linear functions of the approximands $\ell$ (perhaps after taking common refinements of the nodal sets $\{ x_{n, j} \}_{j}$).
	Thus, as a convergent limit of quadratic forms, the left-hand side is also a quadratic form in $\ell$, which shows that $u$ is weakly Gaussian.
	
	Finally, if $u$ is essentially separably valued, then the claim about strong Gaussianity follows from Pettis' measurability theorem.
\end{proof}

\bibliographystyle{abbrvnat}
\bibliography{references}
\addcontentsline{toc}{section}{References}

\end{document}